\newcommand{\Ps}{{\mathbf{P}}}
\newcommand{\Z}{{\mathbf{Z}}}
\newcommand{\C}{{\mathbf{C}}}
\newcommand{\Q}{{\mathbf{Q}}}
\newcommand{\A}{{\mathbf{A}}}
\newcommand{\F}{{\mathbf{F}}}
\newcommand{\oQ}{\overline{\mathbf{Q}}}
 \newcommand{\p}{\mathfrak{p}}
\renewcommand{\phi}{\varphi}
    \newtheorem{lemma}{Lemma}[section]
    \newtheorem{proposition}[lemma]{Proposition}
    \newtheorem{theorem}[lemma]{Theorem}
   \theoremstyle{definition}
    \newtheorem{notation}[lemma]{Notation}
    \newtheorem{remark}[lemma]{Remark}
    \DeclareMathOperator{\rank}{rank}
\DeclareMathOperator{\Frob}{{Frob}}
\DeclareMathOperator{\Tr}{{Tr}}
\DeclareMathOperator{\et}{{\acute{e}t}}
\DeclareMathOperator{\sing}{{sing}}
\DeclareMathOperator{\prim}{{prim}}
\DeclareMathOperator{\cha}{{char}}
\DeclareMathOperator{\coker}{{coker}}
\DeclareMathOperator{\Gr}{{Gr}}
\DeclareMathOperator{\rig}{{rig}}
\begin{document}
\title[Elliptic threefold]{The elliptic threefold $y^2=x^3+16s^6+16t^6-32(t^3s^3+t^3+s^3)+16$}
\author{Remke Kloosterman}
\address{Institut f\"ur Al\-ge\-bra\-ische Geo\-me\-trie, Leibniz Universit\"at
Hannover, Wel\-fen\-gar\-ten 1, 30167 Hannover, Germany}
\email{kloosterman@math.uni-hannover.de}

\begin{abstract} We present a method to calculate the rank of $E(\oQ(s,t))$ for the elliptic curve mentioned in  the title. This method uses a generalization of a method from Van Geemen and Werner to calculate $h^4(Y)$ for nodal hypersurfaces $Y$.
\end{abstract}
\subjclass{}
\keywords{}
\date{\today}
\thanks{The author wishes to thank Klaus Hulek, Matthias Sch\"utt and Jaap Top for comments on an earlier version of this paper.}
\maketitle

\section{Introduction}\label{secInt}
In this paper we study the elliptic threefold associated with 
\begin{equation} \label{mainEqn} y^2=x^3+16s^6+16t^6-32(t^3s^3+t^3+s^3)+16.\end{equation}
This equation defines a singular threefold $T$ in $\A^4$, which is birational to an elliptic 3-fold $X$. Since an elliptic 3-fold $ X $ is smooth and projective (by definition) we have to apply several transformations in order to obtain $X$. For more on this see \cite[Section 4]{ell3HK}.

At the same time, we can consider (\ref{mainEqn}) as the equation for an elliptic curve $E/\Q(s,t)$. The groups $E(\oQ(s,t))$ and $E(\Q(s,t))$ are isomorphic to the group of rational sections of the elliptic fibration on $X$ and the subgroup of rational sections that can be defined over $\Q$, respectively.

One can find several points on $E(\oQ(s,t))$: For example,  $P_1:=(-4s,4(t^3-s^3-1))$  is a point on $E(\oQ(s,t))$. Using the symmetry in $s$ and $t$ of  (\ref{mainEqn}) we find additional points $P_2:=(-4t,4(s^3-t^3-1))$ and
$P_3:=(-4ts,4(1-s^3-t^3))$. Finally, using the complex multiplication on $E$ we find three additional independent points $\omega P_1,\omega P_2, \omega P_3$, where $\omega$ is the complex multiplication of the elliptic curve $E$, i.e., we multiply the $x$-coordinate with a fixed third root of unity.

We listed so far  six points  and they turn out to be independent, i.e., they define a rank 6 subgroup of $E(\oQ(s,t))$. Using the methods of Hulek and the author \cite{ell3HK} we concluded in \cite{elljcst} that $\rank E(\oQ(s,t))=6$. So we have found explicit generators for a finite index subgroup of $E(\oQ(s,t))$ and we could stop at this point. However, we would like to  provide a different proof for the fact that $\rank E(\oQ(s,t))=6$.

As in the papers \cite{ell3HK,elljcst} we start by using that $\rank E(\oQ(s,t))+1$ equals the rank of  $ H^4(Y,\Z) \cap H^{2,2}(H^4(Y,\C))$ for a certain threefold $Y$ in the weighted projective space $ \Ps(2,3,1,1,1)$. In our case it turns out that $\Sigma:=Y_{\sing}$ consists of 9 singularities, all of which are of type $D_4$. Such a singularity is isolated and weighted homogeneous. A method due to Dimca \cite{DimBet} yields that the 4th cohomology group with support in  $\Sigma$, $H^4_\Sigma(Y)$, is  pure of type  $(2,2)$ and the same holds for $H^4(Y)$. Hence $\rank E(\oQ(s,t))=h^4(Y)-1$.

We provide a different method to calculate $h^4(Y)$ than in \cite{ell3HK,elljcst}. In \cite{ell3HK} we used
\[ H^4(Y)_{\prim} \cong \coker \psi: H^4(\Ps(2,3,1,1,1)\setminus Y) \to \oplus_{p\in \Sigma} H^4_p(Y)\]
and we indicated a method to explicitly calculate the map $\psi$. This means that we can represent $\psi$ (with respect to some choice of basis) as a matrix $M$ and then calculate $\rank M$. In the present example this is still feasible and we obtain a $21 \times 9$ matrix. Using a computer one easily calculates the rank of a matrix of this size.
However, if the number of singularities increases, or if the degree of the equation increases, then the size of the matrix also increases and this leads rapidly to matrices that are quite big. 

In \cite{vGW} Van Geemen and Werner provided a different approach for computing $h^4(Y)$. For a nodal hypersurfaces $X$ in  $\Ps^4$, they first related the Betti numbers of $X$ with the Betti numbers of a fixed resolution of singularities $\tilde{X}$. Since the Euler characteristic of $X$ and $\tilde{X}$ only depends on the degree of $X$ and the number of double points, and since the $h^i(\tilde{X})$ are known for $i\neq 2,3,4$ we get a linear relation between $h^2$, $h^3$ and $h^4$.  Poincar\'e duality yields $h^2=h^4$. Hence to determine all the Betti numbers it suffices to determine $h^4$.

Let $E$ be the exceptional divisor of $\tilde{X}\to X$ and let $p$ be a prime of good reduction of $\tilde{X}$  such that $\Frob_p$ acts as multiplication by $p$ on $H^2_{\et}(E,\Q_\ell)$ (i.e., all singularities of $Y$ are defined over $\F_p$ and at each singularity the tangent cone is isomorphic to $\Ps^1\times \Ps^1$ rather than a twist of $\Ps^1\times\Ps^1$). The Lefschetz trace formula yields
\[ \mid \#\tilde{X}(\F_p)-p^3-(p^2+p)(h^4(\tilde{X})) -p-1 \mid \leq p^{3/2}h^3(\tilde{X}) = p^3(\chi(\tilde{X})-2-2h^4(\tilde{X}) )\]
Now $\#\tilde{X}(\F_p)$ can  be determined by a naive point count with a computer. All other quantities, except for $h^4$, are known. 
This inequality yields a lower and an upper bound for $h^4$, and if $p$ is sufficiently large these bounds differ by less than 1, hence $h^4$ is the unique integer in that particular interval.

It is easy to check that the arguments of Van Geemen and Werner can  be extended to hypersurfaces $X$ in weighted projective 4-space, admitting a resolution of singularities $\tilde{X}$ such that the exceptional divisor $E$ is a union of rational surfaces. This includes all hypersurfaces with at most isolated $ADE$-singularities.

We slightly extend this method. The main difference with Van Geemen and Werner is that we work with the rigid cohomology of $Y$ rather than the \'etale cohomology of $\tilde{Y}$.
In an upcoming paper we prove that our method can be extended to hypersurfaces $(Y,p)$ such that $H^4_p(Y)$ has a Hodge structure of Tate type.
 One can easily provide examples where this holds, but where Van Geemen-Werner does not work. 
It should be said that the method of Hulek and the author works for an even bigger class of varieties: e.g., it works  for hypersurfaces such that  the cokernel of $H^4(\Ps \setminus Y)\to \oplus H^4_p(Y)$ is a Hodge structure of Tate type. From a theoretical point of view, it should be remarked that the method presented in this paper and the method of Van Geemen and Werner work only over number fields, whereas the method of Hulek and the author works for any variety defined over the complex numbers.

\section{The hypersurface $Y$}
Let $K$ be either a subfield of the complex numbers or a finite field not of characteristic 2 or 3, such that $K$ contains a primitive third root of unity $\omega$.
Consider the equation (\ref{mainEqn}). The hypersurface in $\A^4$ defined by this equation is birationally equivalent to the hypersurface $Y\subset \Ps(2,3,1,1,1)$ defined by
\[ y^2=x^3+16(z_0^6+z_1^6+z_2^6-2(z_0^3z_1^3+z_1^3z_2^3+z_0^3z_2^3)). \]
Let $C\subset \Ps^3$ be the curve defined by
\[ z_0^6+z_1^6+z_2^6-2(z_0^3z_1^3+z_1^3z_2^3+z_0^3z_2^3)=0.\]
\begin{lemma} Suppose $K$ is algebraically closed and not of characteristic $2,3$. 
Let $\omega$ be a primitive third root of unity. Then the hypersurface $Y_{\sing}$ consists of the $9$ points 
\[ (0:0:\omega^i:1:0), (0:0:\omega^j:0:1)\mbox{ and }(0:0:0:\omega^k:1)\]
for $i,j,k\in \{0,1,2\}$, and at each of these points $Y$ has a $D_4$-singularity.
\end{lemma}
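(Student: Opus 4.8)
The plan is to split the statement into two independent tasks: first locate $Y_{\sing}$ via the Jacobian criterion, then identify the analytic type of each singular point. Throughout write $F=y^2-x^3-16f$ with $f=z_0^6+z_1^6+z_2^6-2(z_0^3z_1^3+z_1^3z_2^3+z_0^3z_2^3)$, a quasi-homogeneous polynomial of weighted degree $6$ on $\Ps(2,3,1,1,1)$. Before applying the Jacobian criterion I would record that $\Ps(2,3,1,1,1)$ is singular only at the two points $(1:0:0:0:0)$ and $(0:1:0:0:0)$ (those where all weight-$1$ coordinates vanish), and that neither lies on $Y$; hence every singular point of $Y$ is a smooth point of the ambient space and is detected by the simultaneous vanishing of all partials of $F$. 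Since $\partial_x F=-3x^2$ and $\partial_y F=2y$, in characteristic $\neq 2,3$ any singular point has $x=y=0$, and the Euler relation $6f=\sum_i z_i\,\partial_{z_i}f$ forces $f=0$ automatically. Thus $Y_{\sing}=\{x=y=0\}\times \Sing(C)$, where $C=\{f=0\}$ is the plane curve in the $z_i$.

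Next I would compute $\Sing(C)$ from $\partial_{z_0}f=6z_0^2(z_0^3-z_1^3-z_2^3)$ and its two cyclic analogues. A short case analysis on how many of the $z_i$ vanish shows: all three nonzero is impossible (adding two of the three equations forces a coordinate to be $0$); exactly two vanishing gives $f\neq 0$; and exactly one vanishing, say $z_2=0$, forces $z_0^3=z_1^3$, producing the three points $(\omega^i:1:0)$. The cyclic images account for the remaining six points, yielding precisely the nine points of the statement.

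For the analytic type I would work at the representative $P=(0:0:1:1:0)$ and invoke symmetry: $f$ is invariant under permutations of the $z_i$ and under the scalings $z_i\mapsto\omega z_i$, and these automorphisms of $Y$ act transitively on the nine points, so it suffices to treat $P$. In the chart $z_0\neq 0$ (a smooth chart $\cong\A^4$, since a weight-$1$ coordinate is nonzero there) with coordinates $\xi=x/z_0^2$, $\eta=y/z_0^3$, $u=z_1/z_0$, $v=z_2/z_0$ and local parameters $s=u-1$, $t=v$, the equation becomes $\eta^2-\xi^3-16f(1,1+s,t)=0$ with $16f(1,1+s,t)=144s^2-64t^3+(\text{higher order})$. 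The quadratic part is therefore $\eta^2-144s^2$, of rank $2$; by the splitting lemma the germ is right-equivalent to $(\text{a nondegenerate binary quadratic form})+g(\xi,t)$, where $g$ has $3$-jet $-\xi^3+64t^3$. This cubic is a product of three distinct linear forms (char $\neq 3$), so the plane germ $g$ is $3$-determined and equivalent to the standard $D_4$ cubic; restoring the two square directions exhibits $P$ as a $D_4$ singularity of the threefold $Y$.

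The routine computations — the partial derivatives and the Taylor expansion of $f$ — are straightforward; the steps needing genuine care are the two structural ones. The first is verifying that the ambient singularities of $\Ps(2,3,1,1,1)$ avoid $Y$, so that the naive Jacobian criterion may be applied at every point of $Y_{\sing}$. The second, and the real crux, is the normal-form reduction: one must confirm that the Hessian has corank exactly $2$ and that the residual cubic $-\xi^3+64t^3$ is nondegenerate (three distinct factors, hence finitely determined), so that the singularity is genuinely $D_4$ and not a more degenerate type.
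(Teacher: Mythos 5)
Your proof is correct and follows essentially the same route as the paper: first check that the two singular points of $\Ps(2,3,1,1,1)$ miss $Y$ so the Jacobian criterion applies, locate the nine points from the partials, and then identify each singularity as the suspension of $-y^2+x^3$ by a cusp of $C$. The only difference is cosmetic: where the paper invokes the fact that $C$ is the $9$-cuspidal sextic dual to a smooth cubic (so each local branch is $t^2+s^3$), you obtain the same local normal form $\eta^2-144s^2-\xi^3+64t^3+\cdots$ by direct Taylor expansion and justify the normal-form reduction (corank $2$ plus nondegenerate residual cubic) explicitly, which is if anything slightly more careful than the paper's ``it turns out that''.
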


\begin{proof}
The singular locus of $\Ps(2,3,1,1,1)$ consists of $(1:0:0:0:0)$ and $(0:1:0:0:0)$. Both points do not lie on $Y$, hence to determine $Y_{\sing}$ it suffices to consider the partials of the defining polynomial. It turns out these partials vanish at 9 points, namely $(0:0:\omega^i:1:0)$, $(0:0:\omega^i:0:1)$ and $(0:0:0:\omega^i:1)$. 

Actually these 9 singularities are dual to the 9 cusps of $C$. Since a cusp has local equation $t^2+s^3$ it turns out that $Y$ has local equation $-y^2+x^3+t^2+s^3$ at each singular point, i.e., $Y$ has 9 $D_4$-singularities. 
\end{proof}

\begin{remark} 
Since the curve $C$ is a sextic with 9 cusps, it is  the dual of a smooth cubic.
The 9 singularities correspond with the 9 flexes on the smooth cubic. The  sextic $C$ is the dual curve of $z_0^3+z_1^3+z_2^3=0$, i.e., the elliptic curve with $j$-invariant $0$.
\end{remark}

\begin{notation} For a prime power $q=p^r$ denote with $\Q_q$ the unique degree $r$ unramified extension of $\Q_p$. Equivalently, $\Q_q$ is the field of fractions of the Witt vectors of $\F_q$.

For a closed subset $\Sigma\subset Y$ we denote with $H^i_\Sigma(Y)$ the $i$-th cohomology group with support in $\Sigma$. Here $H^i_\Sigma$ denotes singular cohomology in case $K\subset \C$ or rigid cohomology with $\Q_q$-coefficients  in case $K=\F_q$.
\end{notation}

\begin{lemma} Let $P=(0,0,0,\omega^i)\in Y_{\sing}$. Let $S\subset \Ps(2,3,2,3)$ be the surface $-y^2+x^3-64s^3+144\omega^it^ 2$. Then $H^4_P(Y)\cong H^2(S)_{\prim}(-1)$.
\end{lemma}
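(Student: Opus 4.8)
The plan is to identify $H^4_P(Y)$ with a local cohomology group of the weighted affine cone $\hat{S}\subset \A^4$ cut out by $f:=-y^2+x^3-64s^3+144\omega^it^2$ (with $x,y,s,t$ carrying weights $2,3,2,3$), and then to compute that local group by relating it to the cohomology of the punctured cone $U:=\hat{S}\setminus\{0\}$, which is a $\mathbf{G}_m$-bundle over $S$.

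First I would record that $H^4_P(Y)$ depends only on the local structure of $Y$ at $P$: in the case $K\subset\C$ this is the local analytic germ, and in the case $K=\F_q$ it is the rigid-analytic germ computing rigid local cohomology. Dehomogenising at $P=(0:0:0:\omega^i:1)$ and setting $s=z_0$, $t=z_1-\omega^i$, a Taylor expansion shows that the defining polynomial of $Y$ has leading part exactly $-y^2+x^3-64s^3+144\omega^it^2$, the square term $144\omega^it^2$ arising from $16(z_1^3-1)^2$ and the cube term $-64s^3$ from $-32z_0^3(z_1^3+1)$. Since this quasi-homogeneous $D_4$-singularity is finitely determined, the higher-order terms can be absorbed by a coordinate change, so the germ of $Y$ at $P$ is isomorphic to the germ of $\hat{S}$ at its vertex and $H^4_P(Y)\cong H^4_{\{0\}}(\hat{S})$.

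Next I would exploit that $\hat{S}$ is contractible (it carries a $\mathbf{G}_m$-action contracting it to the vertex) and, by the Jacobian criterion, is smooth away from $0$. The long exact sequence of local cohomology then collapses to $H^4_{\{0\}}(\hat{S})\cong H^3(U)$. The quotient $U\to S$ is a $\mathbf{G}_m$-bundle over the quasi-smooth hypersurface $S$, with Euler class the polarisation $c_1(\cO_S(1))\in H^2(S)$. The associated Gysin (Leray) sequence, together with $H^1(S)=H^3(S)=0$ — which follows from the weighted Lefschetz hyperplane theorem, $S$ being a $V$-manifold of dimension $2$ — yields a short exact sequence whose only surviving term is $\ker\bigl(H^2(S)(-1)\xrightarrow{\cup c_1}H^4(S)\bigr)$. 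By hard Lefschetz for the $V$-manifold $S$ this cup-product map is surjective with kernel $H^2(S)_{\prim}$, and the Tate twist $(-1)$ is supplied by $H^1(\mathbf{G}_m)=\Q(-1)$. Chaining the isomorphisms gives $H^4_P(Y)\cong H^2(S)_{\prim}(-1)$.

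The main obstacle is the first step in the rigid setting. Over $\C$ the reduction to the cone is classical (finite determinacy together with the cone structure of an isolated singularity), but over $\F_q$ one must check that rigid local cohomology sees only the formal/rigid-analytic germ and is insensitive to the higher-order terms, and that purity, the Lefschetz hyperplane theorem and hard Lefschetz are all available for the rigid cohomology of the $V$-manifold $S$. Verifying these inputs — rather than the routine Gysin computation — is where the real work lies; this is precisely the point at which the present approach departs from Van Geemen–Werner by working with rigid cohomology of $Y$ in place of \'etale cohomology of a resolution.
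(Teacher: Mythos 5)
Your proposal is correct and follows essentially the same route as the paper: the paper identifies the same local equation $-y^2+x^3-64s^3+144\omega^i t^2+\mbox{h.o.t.}$, invokes Dimca's result on semi-weighted homogeneous singularities in characteristic zero (which encapsulates exactly the finite-determinacy, cone, and punctured-cone steps you spell out), and in positive characteristic uses the same $K^*$-bundle structure of the punctured germ over $S$ together with the Leray spectral sequence. Your version is simply a more explicit unpacking of these citations, and your closing caveat about verifying the rigid-cohomology inputs is the same point the paper glosses over.
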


\begin{proof}
Setting $z_1=1$ and moving $(0,0,0,\omega^i)$ to $(0,0,0,0)$ yields a local defining polynomial for $Y$:
\[ -y^2+x^3-64s^3+144 \omega^i t^2 + \mbox{ higher order terms}\]
Suppose for the moment that $K\subset \C$.
Since this singularity is semi-weighted homogeneous it follows from Dimca's work that the local cohomology $H^4_P(Y)$ is isomorphic to $H^2(S)_{\prim}(-1)$ where $S$ is  the (quasi-smooth) surface $y^2=x^3-64s^3+144 \omega^i t^2$ in $\Ps(2,3,2,3)$. 

If $K$ is a field of positive  characteristic then we can obtain similar results. In a neighborhood of $P$ we have that $(Y,P)$ is given by $-y^2+x^3-64s^3+144t^2=0$. Hence $Y\setminus \{P\}$ is locally a $K^*$-bundle over $S\subset \Ps(2,3,2,3)$. As in  characteristic zero it follows then from the Leray spectral sequence   that $H^4_{p,\rig}(Y)\cong H^2(S)_{\prim,\rig}(-1)$. 
\end{proof}

Setting $s_1=4s, t_1=12t$ one finds that $S$ is isomorphic to the surface $-y^2+x^3-s_1^3+t_1^2=0$.

\begin{remark}
The group $H^2(S)_{\prim}$ can be calculated as follows: first note that the surface $S$ is smooth. Hence, in case $K=\F_p$ it follows from the work of Baldassarri-Chiarellotto \cite{BalChi} that $H^2(S,\Q_q)$ equals the second algebraic de Rham cohomology group (with $\Q_q$-coefficients) of a lift of $S$ to characteristic zero.

This reduces the problem to the case $K=\C$. By the work of Griffiths and Steenbrink \cite{SteQua} we know that the second primitive cohomology is isomorphic with degree 2 part of the Jacobian ring of the defining equation (with  $\deg(s_1)=\deg(x)=2$ and $\deg(y)=\deg(t_)=3$). The Jacobian ring is isomorphic to $R[x,y,s_1,t_1]/(x^2,y,s_1^2,t_1)$, and the degree 2 part is generated by $x$ and $s$  hence is two-dimensional. Hence $h^0(S)=h^4(S)=1$, $h^2(S)=3$.
\end{remark}

\begin{lemma} 
Suppose $K$ is a finite field with $q$ elements (and $q\equiv 1 \bmod 3$). Then $\Frob_q$ acts as multiplication by $q$ on the  rigid cohomology group $H^2(S)$.
\end{lemma}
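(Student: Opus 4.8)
The plan is to show that $H^2(S)$ is spanned by the cycle classes of divisors that are all defined over $\F_q$, and then to invoke the fact that the class of an $\F_q$-rational divisor on a smooth surface is a Frobenius-invariant element of the Tate twist $H^2(S)(1)$; equivalently, $\Frob_q$ multiplies such a class by $q$ in $H^2(S)$. By the preceding remark we already know that $h^2(S)=3$ and that $H^2(S)$ is pure of Hodge type $(1,1)$, so in particular $h^{2,0}(S)=0$; via Baldassarri--Chiarellotto the rigid group $H^2(S)$ is again three-dimensional and its Frobenius eigenvalues are Weil numbers of weight $2$. It therefore suffices to exhibit three independent divisor classes, each represented by a cycle defined over $\F_q$.

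To produce such cycles I would rewrite the defining equation of $S$ as $(t_1-y)(t_1+y)=(s_1-x)(s_1-\omega x)(s_1-\omega^2 x)$, which makes sense over $\F_q$ precisely because the hypothesis $q\equiv 1\bmod 3$ forces $\omega\in\F_q$. For each $i\in\{0,1,2\}$ and each choice of sign the locus $C_i^{\pm}:=\{\,t_1=\pm y,\ s_1=\omega^i x\,\}$ lies on $S$, since on it $s_1^3=x^3$ and $t_1^2=y^2$ make both sides of the equation vanish. Each $C_i^{\pm}$ is a copy of $\Ps(2,3)\cong\Ps^1$ inside $S$, and all six of these rational curves are manifestly defined over $\F_q$.

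The main obstacle is to check that these six classes already span $H^2(S)$. Here I would use the evident linear equivalences on $S$: the weight-three divisor $\{t_1=y\}$ decomposes as $C_0^{+}+C_1^{+}+C_2^{+}$ and is linearly equivalent to $\{t_1=-y\}=C_0^{-}+C_1^{-}+C_2^{-}$ (both are cut out by a weight-three form, hence $\sim\cO_S(3)$), while each weight-two divisor $\{s_1=\omega^i x\}\sim\cO_S(2)$ equals $C_i^{+}+C_i^{-}$, so that $C_i^{+}+C_i^{-}\sim C_j^{+}+C_j^{-}$ for all $i,j$. These three relations cut the six generators down to a span of dimension at most three; computing the intersection numbers $C_i^{\pm}\cdot C_j^{\pm}$ together with the intersection of each curve against the ample class, one obtains a Gram matrix which I expect to have rank exactly $3$, confirming that the span meets the dimension bound and therefore equals all of $\NS(S)_{\Q}\cong H^2(S)$. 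Since this spanning set consists entirely of $\F_q$-rational cycles, every class in $H^2(S)$ is a $\Q$-linear combination of Frobenius-invariant elements of $H^2(S)(1)$, and hence $\Frob_q$ acts on $H^2(S)$ as multiplication by $q$, as claimed.
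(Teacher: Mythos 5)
Your route is genuinely different from the paper's, and it is worth comparing the two. The paper's proof is a three-line point count: since $t_1^2-y^2$ splits, $S$ is isomorphic to $t_1y+x^3-s_1^3=0$, a direct count gives $\#S(\F_q)=q^2+3q+1$, the Lefschetz trace formula then forces $\Tr(\Frob_q\mid H^2_{\rig}(S))=3q$, and since the three Frobenius eigenvalues each have complex absolute value $q$ (purity for the smooth proper $S$) they must all equal $q$. Your divisor-class argument, if completed, buys a little more (it shows $\Frob_q$ is literally $q\cdot\mathrm{id}$ rather than merely having semisimplification $q\cdot\mathrm{id}$, and it would survive in situations where a naive count is not decisive), at the cost of invoking the cycle class map in rigid cohomology and its Frobenius compatibility, which you should at least cite.

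As written, however, your proposal has a genuine gap at its decisive step: you only ``expect'' the Gram matrix of the six lines to have rank $3$, and the computation is subtler than your write-up suggests because $S$ is only quasi-smooth. It passes through three $\mu_2$-points of $\Ps(2,3,2,3)$ (on $y=t_1=0$) and two $\mu_3$-points (on $x=s_1=0$), and your curves meet one another exactly at these points, so every intersection number is fractional. The numbers do work out: with $A_i=C_i^+$, $B_i=C_i^-$ and $H=\cO_S(1)$ one has $H^2=6/(2\cdot 3\cdot 2\cdot 3)=1/6$, $A_i\cdot A_j=1/3$ for $i\neq j$ (transverse branches through the $\mu_3$-point $(0:1:0:1)$), $A_i\cdot B_i=1/2$, $A_i\cdot B_j=0$ for $i\neq j$, $A_i\cdot H=1/6$, hence $A_i^2=-1/6$; the relations $B_i=2H-A_i$ and $A_0+A_1+A_2=3H$ reduce the span to that of $A_0,A_1,H$, whose Gram matrix $\tfrac16\left(\begin{smallmatrix}-1&2&1\\ 2&-1&1\\ 1&1&1\end{smallmatrix}\right)$ has determinant $3/216\neq 0$. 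So the six classes do span a $3$-dimensional subspace of $H^2_{\rig}(S)$ and your argument closes, but this verification must actually appear in the proof; without it the claim that the $\F_q$-rational cycles span all of $H^2(S)$ is unsupported. Given that the point count settles the lemma immediately, you should weigh whether the extra machinery earns its keep here.
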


\begin{proof} 
Since $t_1^2-y^2$ factors over $\F_q$ in two distinct factors we have that $S$ is isomorphic to the surface
\[ t_1y+x^3-s_1^3=0.\]
This implies that for each choice of $x$ and $s_1$ such that $x^3-s_1^3\neq 0$, we have $q+1$ possibilities for $(y,t_1)$ and if $x^3-s_1^3=0$ we have $2q-1$ possibilities for $(y,t_1)$.

Since $q\equiv 1 \bmod 3$ there are three points in  $\Ps^1$ such that $x^3-s_1^3$ vanishes, hence $S$ has $(q-2)(q-1)+3(2q-1)=q^2+3q-1$ points with at least one of $x,s$ nonzero. If $x=s_1=0$ then we have the equation $t_1y=0$ yielding the points $(1:0:0:0)$ and $(0:0:0:1)$. In total we obtain $q^2+3q+1$ points.
From the Lefschetz trace formula it follows now that Frobenius has trace $3q$ on $H^2_{\rig}(Y)$. Since each eigenvalue of Frobenius has complex absolute value $q$, and $h^2(S)=3$, it follows that Frobenius acts as multiplication by $q$ on $H^2_{\rig}(S)$.
\end{proof}

From $H^4_q(Y)\cong H^2(S)(-1)$ it follows that $\Frob_q^*$ acts as multiplication by $q^2$ on $H^4_q(Y)$.

We summarize the results of this section:
\begin{proposition} \label{prpSec2} Let $\Sigma$ be the singular locus of $Y$. Then $\Sigma$ consists of 9 points, and at each point $Y$ has a $D_4$ singularity. 
Moreover, if $K\subset \C$ then
$H^4_{\Sigma}(Y)=\Q(-2)^{18}$ and if $K=\F_q$  and $q\equiv 1 \bmod 3$ then  $H^4_{\Sigma}(Y)=\Q_q(-2)^{18}$.
\end{proposition}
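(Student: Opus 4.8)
The plan is to assemble the local results established earlier in this section, observing that the statement about $\Sigma$ is exactly the first lemma of the section, so only the description of $H^4_\Sigma(Y)$ needs an argument. First I would record, from that lemma, that $\Sigma$ is the disjoint union of nine $D_4$-points. Since these points are pairwise disjoint closed subsets of $Y$, excision for cohomology with supports yields
\[ H^4_\Sigma(Y)\;\cong\;\bigoplus_{P\in\Sigma}H^4_P(Y), \]
so it suffices to analyse a single local summand and then sum.

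Next I would identify each summand with the model surface. The lemma treating $P=(0,0,0,\omega^i)$ gives $H^4_P(Y)\cong H^2(S)_{\prim}(-1)$ for the three points of that family, and, because the defining equation of $Y$ is symmetric in $z_0,z_1,z_2$, the identical local analysis applies at the remaining six points; thus all nine summands are isomorphic to $H^2(S)_{\prim}(-1)$. The Jacobian-ring computation of the preceding remark shows $H^2(S)_{\prim}\cong R_2$, which is two-dimensional, so each $H^4_P(Y)$ is two-dimensional and $\dim H^4_\Sigma(Y)=9\cdot2=18$.

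It then remains to determine the Hodge type (resp.\ Frobenius action). In the case $K\subset\C$, the Griffiths--Steenbrink isomorphism places the whole of $H^2(S)_{\prim}$ in the single graded piece $R_2$; the pieces computing the $(2,0)$- and $(0,2)$-parts are $R_{-4}$ and $R_8$, both of which vanish since the Jacobian ring $R=K[x,s_1]/(x^2,s_1^2)$ has socle degree $4$. Hence $H^2(S)_{\prim}$ is pure of type $(1,1)$, its Tate twist $H^2(S)_{\prim}(-1)$ is pure of type $(2,2)$, and a Hodge structure concentrated in bidegree $(2,2)$ is a sum of copies of $\Q(-2)$; each summand is $\Q(-2)^2$, giving $H^4_\Sigma(Y)\cong\Q(-2)^{18}$. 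In the case $K=\F_q$ with $q\equiv1\bmod3$, the Frobenius lemma gives that $\Frob_q$ acts as multiplication by $q$ on $H^2(S)$, hence on its primitive part; the twist by $(-1)$ multiplies the eigenvalue by $q$, so $\Frob_q$ acts as $q^2$ on each $H^4_P(Y)$, each summand is $\Q_q(-2)^2$, and summing gives $H^4_\Sigma(Y)\cong\Q_q(-2)^{18}$.

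The genuine content sits in the purity/eigenvalue step: the two-dimensionality of $H^2(S)_{\prim}$ alone would only give an $18$-dimensional weight-$4$ structure, and it is the vanishing $R_{-4}=R_8=0$ together with the sharp Frobenius computation on all of $H^2(S)$ that forces the structure to be split Tate, i.e.\ $\Q(-2)^{18}$ rather than something carrying a non-trivial Hodge refinement. The remaining points — excision across the nine disjoint singularities and the bookkeeping of the symmetry $z_0\leftrightarrow z_1\leftrightarrow z_2$ — are routine once those inputs are in place.
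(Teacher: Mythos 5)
Your proof is correct and follows essentially the same route as the paper, which states this proposition as a summary of the section's lemmas without writing out a separate argument: excision over the nine disjoint $D_4$-points, the local identification $H^4_P(Y)\cong H^2(S)_{\prim}(-1)$ extended by the $z_0,z_1,z_2$-symmetry, the two-dimensionality of $R_2$, and the Frobenius lemma. Your explicit verification that $R_{-4}=R_8=0$ (so that $H^2(S)_{\prim}$ is pure of type $(1,1)$) is a detail the paper leaves implicit, attributing purity to Dimca's method in the introduction, but it is exactly the intended justification.
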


\section{Calculating $h^i(Y)$ and $\rank E(\Q(s,t))$}
In this section $H^\bullet$ refers to the de Rham cohomology of $Y_{\C}$ if  $K$ is a number field, and  to rigid cohomology if $K$ is a finite field. All exact sequence are either  exact sequences of mixed Hodge structures  or exact sequence of $\Q_q$-vector spaces with Frobenius action. The cohomology groups $H^i(Y)$ come with a weight filtration, we use $\Gr^W_k H^i(Y)$ to indicate the graded pieces of this filtration.

Suppose for the moment that $K$ is a number field, and $\p$ is a prime not lying over
 2 or 3. Let  $\tilde{Y}$ be a minimal resolution of singularities of $Y$ (i.e., blowing up the $D_4$-singularity once, and then blowing up the resulting three $A_1$-singularities.) An easy calculation shows that $\tilde{Y}$ has good reduction modulo $\p$, hence $\tilde{Y}_{\F_\p}$ is a resolution of singularities of $Y_{\F_\p}$. 

Assume again that $K$ is either a number field or a finite field not of characteristic 2 and 3, and $K$ contains a primitive root of unity. The above discussion  shows that if $K$ is not of characteristics 2 or 3 then $Y$ has a resolution of singularities.

Let $E$ be the exceptional divisor of $\tilde{Y} \to Y$.
In the following, we denote $\Sigma=Y_{\sing}$  and $Y^*=Y\setminus \Sigma=\tilde{Y}\setminus E$.

 \begin{lemma} \label{lemsup} $H^i_E(\tilde{Y})=0$ for $i\neq 2,4,6$.

 \end{lemma}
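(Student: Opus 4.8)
The plan is to reduce the computation of the cohomology with support $H^i_E(\tilde{Y})$ to the ordinary (co)homology of the exceptional fibre and then exploit that the latter is built out of rational surfaces. Since the nine $D_4$-points are isolated, the exceptional divisor is a disjoint union $E=\bigsqcup_{P\in\Sigma}E_P$ of the compact fibres $E_P=\pi^{-1}(P)$, and cohomology with support is additive and local. Thus $H^i_E(\tilde{Y})=\bigoplus_{P\in\Sigma}H^i_{E_P}(U_P)$ for small neighbourhoods $U_P$ of $E_P$ (excision), and it suffices to treat a single $P$.

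Because $\tilde{Y}$ is smooth of dimension $3$, I would invoke Poincar\'e--Lefschetz duality to reduce $H^i_{E_P}(U_P)$ to the homology of $E_P$: in the Betti setting ($K\subset\C$) one has $H^i_{E_P}(U_P)\cong H_{6-i}(E_P;\Q)$ (Borel--Moore homology, which equals ordinary homology as $E_P$ is compact), and in the rigid setting ($K=\F_q$) the analogous duality identifies $H^i_{E_P,\rig}(\tilde{Y})$ with a twist of the dual of $H^{6-i}_{c,\rig}(E_P)$. Since $E_P$ has complex dimension $2$, the groups vanish for $6-i>4$ and for $6-i<0$, i.e.\ for $i\le 1$ and $i\ge 7$. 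This disposes of every $i\notin\{2,3,4,5,6\}$, and it remains to kill the two odd degrees $i=3,5$, equivalently to show $H_3(E_P)=H_1(E_P)=0$, equivalently $H^3(E_P)=H^1(E_P)=0$.

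For the structure of $E_P$ I would use the explicit resolution described before the lemma. The local equation $-y^2+x^3-s_1^3+t_1^2$ has tangent cone $\{(t_1-y)(t_1+y)=0\}$, a pair of planes meeting along a line; blowing up the $D_4$-point adjoins this pair of (rational) surfaces, carrying the three resulting $3$-fold nodes on the double line, and blowing up each node adjoins a quadric $\Ps^1\times\Ps^1$. Hence $E_P$ is a connected union of rational surfaces glued along copies of $\Ps^1$, with triple intersections at worst points. Every stratum is a rational variety with cohomology of Tate type concentrated in even degree, so in the Mayer--Vietoris (descent) spectral sequence $E_1^{p,q}=\bigoplus_{|I|=p+1}H^q(E_I)\Rightarrow H^{p+q}(E_P)$ the columns sit in even $q$. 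Therefore $H^{\mathrm{odd}}(E_P)$ can only receive contributions from terms with $p$ odd: $H^1(E_P)$ from $E_2^{1,0}=H^1(\Delta;\Q)$, where $\Delta$ is the dual complex of $E_P$, and $H^3(E_P)$ from $E_2^{3,0}=H^3(\Delta;\Q)$ together with $E_2^{1,2}$.

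The formal inputs are then immediate: the dual complex of the resolution of the rational (canonical) $D_4$-singularity is contractible — for surface ADE it is the Dynkin tree, and for the configuration above it is visibly simply connected — so $E_2^{1,0}=E_2^{3,0}=0$. The one genuinely geometric step, which I expect to be the main obstacle, is the vanishing of $E_2^{1,2}=\coker\bigl(\bigoplus_\alpha H^2(S_\alpha)\to\bigoplus_{\text{double curves}}H^2(\Ps^1)\bigr)$, i.e.\ surjectivity of the restriction map from the components to the double curves. I would verify this from the explicit incidences of the two planes and the three quadrics: each double $\Ps^1$ meets an ample class on an adjacent rational surface nontrivially, and the double curves impose independent conditions in the $H^2$ of the components, so the cokernel vanishes. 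Granting this, $H^3(E_P)=H^1(E_P)=0$, hence $H^i_{E_P}(U_P)=0$ for $i\in\{0,1,3,5\}$ and $i\ge 7$, and summing over $P$ gives $H^i_E(\tilde{Y})=0$ for $i\neq 2,4,6$.
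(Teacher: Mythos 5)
Your argument is correct and follows essentially the same route as the paper: reduce via duality on the smooth threefold $\tilde{Y}$ to showing $H^{\mathrm{odd}}(E)=0$, and then deduce this from the explicit structure of the exceptional divisor of the $D_4$-resolution (two planes plus three quadrics). The paper merely asserts that this vanishing ``follows easily'' from $E$ being a union of rational surfaces; your Mayer--Vietoris spectral sequence analysis (contractible dual complex plus surjectivity onto the $H^2$ of the double curves) supplies exactly the missing details, and the surjectivity you flag as the main obstacle does hold, since each plane component is $\Ps^2$ blown up in the three nodes and the four double curves lying on it have classes $h-e_1-e_2-e_3,\,e_1,\,e_2,\,e_3$, which span its $H^2$.
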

 \begin{proof}

  Since $\tilde{Y}$ is smooth, a standard argument involving Poincar\'e duality and the Gysin exact sequence yields that $H^i_E(\cong{Y})\cong H^{6-i}(E)^*(-3)$. The resolution of a $D_4$-singularity is well-known. The exceptional divisor is a union of rational surfaces. From  this it follows easily  that $H^i(E)=0$ for $i\neq 0,2,4$.
 \end{proof}

\begin{lemma} The exact sequence of the pair $(Y,Y^*)$ induces the following exact sequence
 \begin{equation} \label{w4eqn} 0\to \Gr^W_4 H^3(Y^*) \to H^4_\Sigma(Y) \to H^4(Y)_{\prim} \to 0. \end{equation}
\end{lemma}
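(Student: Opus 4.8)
The plan is to read the claimed short exact sequence off the long exact sequence of the pair $(Y,Y^*)$ and then pin down its two outer terms by weight considerations. Writing $r_i\colon H^i(Y)\to H^i(Y^*)$ for the restrictions, $\partial$ for the connecting map, and $\alpha\colon H^4_\Sigma(Y)\to H^4(Y)$ for the map induced by $\Sigma\hookrightarrow Y$, the relevant strand is
\[ H^3(Y)\xrightarrow{r_3} H^3(Y^*)\xrightarrow{\partial} H^4_\Sigma(Y)\xrightarrow{\alpha} H^4(Y)\xrightarrow{r_4} H^4(Y^*). \]
From this strand one immediately extracts $0\to\coker(r_3)\xrightarrow{\partial} H^4_\Sigma(Y)\xrightarrow{\alpha}\ker(r_4)\to 0$, so it remains to identify $\coker(r_3)$ with $\Gr^W_4 H^3(Y^*)$ and $\ker(r_4)$ with $H^4(Y)_{\prim}$.

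For the left-hand term I would argue purely with weights. By Proposition \ref{prpSec2}, $H^4_\Sigma(Y)$ is pure of weight $4$. As $Y$ is projective, $H^3(Y)$ has weights $\le 3$, while $H^3(Y^*)$ has weights $\ge 3$ because $Y^*$ is smooth; hence $\ker\partial=\operatorname{im}(r_3)$, being the image of a morphism of mixed Hodge structures from a weight-$\le3$ group into a weight-$\ge3$ group, is pure of weight $3$ and lies in $W_3H^3(Y^*)$. Since $\partial$ is a morphism of mixed Hodge structures into a pure weight-$4$ group it kills $W_3H^3(Y^*)$, so restricting $\partial$ to $W_4H^3(Y^*)$ yields a well-defined map $\bar\partial\colon\Gr^W_4H^3(Y^*)\to H^4_\Sigma(Y)$ whose image equals $\operatorname{im}\partial=\ker\alpha$ (strictness forces $\operatorname{im}\partial$ to be reached already from $W_4$). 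Because $\ker\partial\subseteq W_3H^3(Y^*)$, this $\bar\partial$ is injective. This is the left half of (\ref{w4eqn}), and I expect it to be routine.

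For the right-hand term I would use the defining description $H^4(Y)_{\prim}=\coker(H^4(\Ps)\to H^4(Y))$, with $\Ps=\Ps(2,3,1,1,1)$ and the image of $H^4(\Ps)$ the line spanned by $h^2$. The goal is to show $H^4(Y)=\langle h^2\rangle\oplus\ker(r_4)$, which identifies $\ker(r_4)=\operatorname{im}\alpha$ with $H^4(Y)/\langle h^2\rangle=H^4(Y)_{\prim}$. That $\langle h^2\rangle\cap\ker(r_4)=0$ is easy: restricting to a general surface section $\gamma=Y\cap H$, which misses the nine points of $\Sigma$ and hence lies in $Y^*$, gives $\int_\gamma(h|_\gamma)^2>0$, so $r_4(h^2)\neq0$. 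The substantive point is the complementary statement $\operatorname{im}(r_4)=\langle r_4(h^2)\rangle$. Here I would compare with the ambient complement: since $\Sigma$ is finite of codimension $4$ one has $H^4(\Ps)\cong H^4(\Ps\setminus\Sigma)$, and the square relating the restrictions $H^4(\Ps)\to H^4(Y)$ and $H^4(\Ps\setminus\Sigma)\to H^4(Y^*)$ shows that the image of $H^4(\Ps\setminus\Sigma)$ in $H^4(Y^*)$ is exactly the line $\langle r_4(h^2)\rangle$. Thus $\operatorname{im}(r_4)=\langle r_4(h^2)\rangle$ amounts to the assertion that every class of $H^4(Y)$ restricts, on the smooth locus $Y^*$, to a class coming from the ambient space — equivalently, that the singularity-supported (primitive) part of $H^4(Y)$ dies under $r_4$.

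This last assertion is the main obstacle: it is a Lefschetz-type vanishing to the effect that $H^4(Y)_{\prim}$ is concentrated at the nine $D_4$ points and is invisible on $Y^*$. I would deduce it from Dimca's description of the cohomology of a hypersurface with isolated singularities, applied at each point of $\Sigma$ — the same input that underlies the identification $H^4(Y)_{\prim}\cong\coker\psi$ recalled in the introduction. Once this is in hand the two identifications assemble into (\ref{w4eqn}). Finally, the purity of $H^4(Y)$ furnished by Proposition \ref{prpSec2} guarantees that all maps above are strict morphisms of pure Tate Hodge structures, so the resulting sequence is one of mixed Hodge structures, respectively of $\Q_q$-vector spaces with Frobenius action in the finite-field case.
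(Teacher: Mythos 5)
Your proposal is correct and follows essentially the same route as the paper: both extract the strand $H^3(Y)\to H^3(Y^*)\to H^4_\Sigma(Y)\to H^4(Y)\to H^4(Y^*)$ from the long exact sequence of the pair, identify the left term via purity of $H^4_\Sigma(Y)$ (Proposition~\ref{prpSec2}) together with the weight bound $\leq 3$ on $H^3(Y)$, and reduce the right term to the fact that the primitive part of $H^4(Y)$ dies on $Y^*$ — which the paper phrases as $h^4(Y^*)=1$ and, like you, defers to Dimca \cite{DimBet}. Your treatment of the connecting map via strictness is a slightly more explicit version of the paper's one-line weight argument, but there is no substantive difference.
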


\begin{proof}
 Since $Y$ has isolated singularities one has that $h^i_\Sigma=0$ for $i\neq3,4,6$. Hence a part of the sequence of the pair $(Y,Y^*)$ reads
 \begin{eqnarray*}  0\to H^2(Y) & \to & H^2(Y^*)\to H^3_\Sigma(Y)\to H^3(Y) \to H^3(Y^*) \to\\&\to& H^4_\Sigma(Y) \to H^4(Y) \to H^4(Y^*)\to 0. \end{eqnarray*}
From the Lefschetz hyperplane theorem \cite[Theorem B22]{Dim} it follows that $H^2(Y)=\Q(-1)$. 
  A standard argument as in \cite{DimBet} shows that $H^4(Y^*)$ is one-dimensional. From this it follows that
 \[  0\to  H^2(Y^*)_{\prim} \to H^3_\Sigma(Y)\to H^3(Y) \to H^3(Y^*)\to H^4_\Sigma(Y) \to H^4(Y)_{\prim} \to 0\]
 is exact.
 From Proposition~\ref{prpSec2} it follows that $H^4_\Sigma(Y)$ is pure of  weight 4. Since $Y$ is compact, the group $H^3(Y)$ has no elements of weight $\geq 4$. Hence we obtain the following exact sequence:
\[ 0\to \Gr^W_4 H^3(Y^*) \to H^4_\Sigma(Y) \to H^4(Y)_{\prim} \to 0. \]
 \end{proof}

Now $H^1(Y^*)$ vanishes since it is an extension of $H^1(Y)$ and $H^2_\Sigma(Y)$ and both these groups vanish. From Lemma~\ref{lemsup} it follows that also $H^3_E(\tilde{Y})$ and $H^5_E(\tilde{Y})$ vanish. Hence   the sequence for the pair $(\tilde{Y},Y^*)$ reads
 \begin{eqnarray}\label{h2seq} 0\to H^2_E(\tilde{Y}) \to H^2(\tilde{Y}) \to H^2(Y^*) \to  0  \end{eqnarray}
 and
 \[ 0\to H^3(\tilde{Y}) \to H^3(Y^*) \to H^4_E(\tilde{Y}) \to H^4(Y) \to H^4(Y^*) \to 0.\]
 
 Moreover one has the so-called \emph{Mayer-Vietoris exact sequence for the discriminant square} \cite[Corollary 5.37]{PSbook}. Part of this sequence is the following
 \begin{eqnarray} \label{MVseq} 0 =H^3(E) \to H^4(Y) \to H^4(\tilde{Y}) \to H^4(E) \to 0. \end{eqnarray}
 
\begin{proposition}\label{prpw23}
There is an exact sequence
\[ 0 \to \Gr^W_2 H^3(Y)^*(-3) \to H^4_\Sigma (Y)  \to H^4(Y)_{\prim} \to 0 \]
\end{proposition}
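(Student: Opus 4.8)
The plan is to reduce everything to the already-established sequence~(\ref{w4eqn}), which provides the surjection $H^4_\Sigma(Y)\to H^4(Y)_{\prim}$ together with its kernel $\Gr^W_4 H^3(Y^*)$. Thus it suffices to produce a canonical isomorphism $\Gr^W_4 H^3(Y^*)\cong \Gr^W_2 H^3(Y)^*(-3)$ and substitute it into~(\ref{w4eqn}). Throughout, every map is an isomorphism of mixed Hodge structures when $K\subset\C$, and of $\Q_q$-vector spaces with Frobenius action when $K=\F_q$.

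The key step is a global duality $H^3(Y^*)\cong H^3(Y)^*(-3)$, which I would obtain from two ingredients. First, since $\Sigma=Y_{\sing}$ is a finite set of points, $H^i(\Sigma)=0$ for $i\geq 1$, so the long exact sequence relating $H^\bullet_c(Y^*)$, $H^\bullet(Y)$ and $H^\bullet(\Sigma)$ (coming from $0\to j_!\Q_{Y^*}\to \Q_Y\to \iota_*\Q_\Sigma\to 0$, where $j\colon Y^*\hookrightarrow Y$ is the open immersion and $\iota\colon\Sigma\hookrightarrow Y$ its complement) collapses to $H^3_c(Y^*)\cong H^3(Y)$. Second, $Y^*=\tilde{Y}\setminus E$ is a smooth threefold, so Poincar\'e duality gives $H^3(Y^*)\cong H^3_c(Y^*)^*(-3)$. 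Composing the two yields the claimed $H^3(Y^*)\cong H^3(Y)^*(-3)$.

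It then remains to take the weight-$4$ graded piece. Since the isomorphism respects the weight filtration, and since dualizing sends $\Gr^W_k$ to $(\Gr^W_{-k})^*$ while the Tate twist $(-3)$ raises weights by $6$, one gets $\Gr^W_4\big(H^3(Y)^*(-3)\big)=(\Gr^W_2 H^3(Y))^*(-3)$. Hence $\Gr^W_4 H^3(Y^*)\cong \Gr^W_2 H^3(Y)^*(-3)$, and inserting this into~(\ref{w4eqn}) gives the asserted sequence. As a consistency check, $\Gr^W_2 H^3(Y)$ is pure of type $(1,1)$ (it is the image of $H^2(E)$ under the Mayer--Vietoris sequence), so $\Gr^W_2 H^3(Y)^*(-3)$ is of type $(2,2)$, matching the purity of $H^4_\Sigma(Y)$ in Proposition~\ref{prpSec2}.

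The main obstacle is not the algebra but the uniformity across the two cohomological settings: one must know that the compactly supported excision sequence and Poincar\'e duality for the smooth, non-proper $Y^*$ hold as identities of weighted Frobenius-modules in rigid cohomology as well (Berthelot's duality between $H^\bullet_{\rig}$ and $H^\bullet_{\rig,c}$ of smooth varieties), and that weights and Frobenius weights correspond under duality. An equivalent route, avoiding the global duality, is to push the isomorphism $H^4_E(\tilde{Y})\cong H^2(E)^*(-3)$ of Lemma~\ref{lemsup} through the pair sequence for $(\tilde{Y},Y^*)$: there $\Gr^W_4 H^3(Y^*)=\ker\big(H^4_E(\tilde{Y})\to H^4(Y)\big)$, which by~(\ref{MVseq}) equals $\ker\big(H^4_E(\tilde{Y})\to H^4(\tilde{Y})\big)$, and this kernel is the Poincar\'e dual of $\coker\big(H^2(\tilde{Y})\to H^2(E)\big)=\Gr^W_2 H^3(Y)$, twisted by $(-3)$.
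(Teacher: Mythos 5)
Your proof is correct, and it reaches exactly the reduction the paper states at the outset --- the isomorphism $\Gr^W_4 H^3(Y^*)\cong \Gr^W_2 H^3(Y)^*(-3)$, to be substituted into (\ref{w4eqn}) --- but by a genuinely different and more economical route. The paper never invokes global Poincar\'e--Lefschetz duality for the open smooth $Y^*$; instead it dualizes (\ref{h2seq}) and (\ref{MVseq}) to identify $H^2(Y^*)_{\prim}$ with $H^4(Y)_{\prim}^*(-3)$, splices this into the six-term sequence of the pair $(Y,Y^*)$, takes $\Gr^W_2$ of the whole sequence, and then dualizes the resulting short exact sequence using the local duality $\Gr^W_2 H^3_\Sigma(Y)\cong H^4_\Sigma(Y)^*(-3)$. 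Your combination of $H^3_c(Y^*)\cong H^3(Y)$ (excision against the zero-dimensional $\Sigma$, using that $Y$ is proper) with $H^3(Y^*)\cong H^3_c(Y^*)^*(-3)$ produces the same isomorphism in one step and avoids both the local duality for $H^\bullet_\Sigma$ and the computation of $H^2(Y^*)_{\prim}$. What the paper's route buys is that it only ever dualizes cohomology of the compact smooth objects $\tilde{Y}$ and $E$ and the local groups $H^\bullet_\Sigma$, for which the needed duality statements are already set up in Lemma~\ref{lemsup}; your route instead requires duality between $H^\bullet$ and $H^\bullet_c$ of the non-proper smooth $Y^*$ together with its compatibility with weights (resp.\ Frobenius in the rigid setting) --- a point you correctly flag, and which is available (Berthelot's duality plus the known weight bounds for rigid cohomology of smooth varieties) and no heavier than facts the paper already uses implicitly, such as $\Gr^W_i H^3(Y^*)=0$ for $i\le 2$. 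Your closing alternative, identifying $\Gr^W_4H^3(Y^*)$ with $\ker\bigl(H^4_E(\tilde{Y})\to H^4(\tilde{Y})\bigr)$ and dualizing it against $\coker\bigl(H^2(\tilde{Y})\to H^2(E)\bigr)=\Gr^W_2H^3(Y)$, is essentially the paper's argument read on the dual side, so either version is a sound replacement for the printed proof.
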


\begin{proof}
It suffices to prove $\Gr^W_2 H^3(Y)^*(-3)\cong \Gr^W_4H^3(Y^*)$. Then the proposition follows from (\ref{w4eqn}).

Substituting $H^2_E(\tilde{Y})=H^4(E)^*(-3)$ and $H^2(Y)=H^4(Y)^*(-3)$ in (\ref{h2seq}) we obtain
\[ 0 \to H^4(E)^*(-3) \to H^4(\tilde{Y})^*(-3)\to H^2(Y^*)  \to   0. \]
Combining this with the dualized version of (\ref{MVseq}) it follows that $H^2(Y^*)\cong H^4(Y)^*(-3)$
and $H^2(Y^*)_{\prim}\cong H^4(Y)_{\prim}^*(-3)$. This yields an exact sequence
\[ 0 \to H^4(Y)_{\prim}^*(-3)\to H^3_\Sigma(Y) \to H^3(Y) \to H^3(Y^*) \to H^4_\Sigma(Y) \to H^4(Y)_{\prim} \to 0.
 \]

Since $Y^*$ is smooth, one has that $\Gr^W_i H^3(Y^*)=0$ for $i\leq 2$. Taking $\Gr_2^W$ in the above exact sequence yields that
\[ 0 \to \Gr^W_4 H^4(Y)_{\prim}^*(-3) \to \Gr^W_2 H^3_\Sigma(Y) \to \Gr^W_2 H^3(Y)\to 0\]
is exact. Since $\Gr^W_2 H^3_\Sigma(Y) \cong \Gr^W_4 H^4_\Sigma(Y)^*(-3) \cong H^4_\Sigma(Y)(1)$, we obtain (after dualizing and twisting) that
\[ 0 \to \Gr^W_2H^3(Y)^*(-3) \to H^4_\Sigma(Y) \to H^4_{prim}(Y) \to 0\]
is exact.
\end{proof}
 
\begin{lemma}\label{lemchi} We have $\chi(Y)=-2$.

\end{lemma}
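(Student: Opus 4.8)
The plan is to compute $\chi(Y)$ geometrically, by realising $Y$ as a Weierstrass fibration over $\Ps^2$ and applying additivity and multiplicativity of the Euler characteristic over a stratification of the base. Write $g=z_0^6+z_1^6+z_2^6-2(z_0^3z_1^3+z_1^3z_2^3+z_0^3z_2^3)$ for the defining sextic, so $C=\{g=0\}$ and the equation of $Y$ is $y^2=x^3+16g$. First I would consider the projection $\pi\colon (x:y:z_0:z_1:z_2)\mapsto (z_0:z_1:z_2)$. The locus $Y\cap\{z_0=z_1=z_2=0\}$ is cut out by $y^2=x^3$ inside $\Ps(2,3)\cong\Ps^1$, which is the single point $Q_\infty=(1:1:0:0:0)$; hence $\pi$ restricts to a morphism $Y\setminus\{Q_\infty\}\to\Ps^2$, and $\chi(Y)=\chi(Y\setminus\{Q_\infty\})+1$. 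For $p\in\Ps^2$ the fibre of this morphism is the \emph{affine} plane cubic $y^2=x^3+16g(p)$, which is smooth when $g(p)\neq 0$ and equals the cuspidal cubic $y^2=x^3$ when $p\in C$, since the discriminant of $y^2=x^3+c$ vanishes exactly for $c=0$.

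The key computation is the Euler characteristic of these fibres. A smooth affine cubic is an elliptic curve with its point at infinity (the point mapping to $Q_\infty$) removed, so it has $\chi=-1$; the affine cuspidal cubic is a rational curve with one unibranch cusp minus a single smooth point, so it has $\chi=2-1=1$. I would then stratify $\Ps^2=U\sqcup C^\circ\sqcup\{9\text{ cusps}\}$, where $U=\Ps^2\setminus C$ and $C^\circ$ is $C$ minus its $9$ cusps. Over each stratum the fibrewise Euler characteristic is constant, so the Euler characteristic of the total space equals that of the fibre times that of the stratum; note that although $Y$ is singular over the $9$ cusps, the fibre there is still the affine cuspidal cubic with $\chi=1$, because the $D_4$-singularity sits at the cusp of that fibre and does not change it set-theoretically.

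It then remains to assemble the pieces. Since $C$ is the dual of the smooth cubic $z_0^3+z_1^3+z_2^3=0$ it has geometric genus $1$, and as its $9$ singularities are unibranch cusps its normalization is homeomorphic to $C$; hence $\chi(C)=2-2\cdot 1=0$. Therefore $\chi(U)=\chi(\Ps^2)-\chi(C)=3$ and $\chi(C^\circ)=\chi(C)-9=-9$, and I obtain
\[ \chi(Y\setminus\{Q_\infty\})=3\cdot(-1)+(-9)\cdot 1+9\cdot 1=-3,\]
so that $\chi(Y)=-2$. I expect the only delicate point to be the bookkeeping at $Q_\infty$: because the section at infinity of the Weierstrass model is collapsed to the single point $Q_\infty$, every fibre of $\pi$ must be taken in its affine form (missing one point), and it is precisely this shift by $+1$ per fibre that turns the naive answer $0$ into $-2$. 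The count is purely topological and motivic, so it yields the same value for the $\ell$-adic, respectively rigid, Euler characteristic in the good-reduction characteristic-$p$ setting.
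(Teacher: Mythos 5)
Your computation is correct, and it takes a genuinely different route from the paper. The paper argues via deformation to the smooth case: a smooth sextic $Y'$ in $\Ps(2,3,1,1,1)$ has $h^3(Y')=42$ by Griffiths--Steenbrink, hence $\chi(Y')=-38$, and the nine $D_4$-points each correct this by their Milnor number $4$ (Dimca's formula), giving $-38+36=-2$; the characteristic-$p$ statement is then deduced from good reduction of the resolution together with the discriminant-square Mayer--Vietoris. You instead exploit the elliptic-fibration structure directly, stratifying $\Ps^2$ by $U$, $C^\circ$ and the nine cusps and using additivity and fibrewise multiplicativity of $\chi$; your fibre counts ($-1$ for the punctured smooth cubic, $+1$ for the punctured cuspidal cubic), the value $\chi(C)=0$ for the $9$-cuspidal sextic, and the $+1$ correction at the contracted point $Q_\infty$ are all right, and $3\cdot(-1)+(-9)\cdot 1+9\cdot 1+1=-2$. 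Your method is more elementary and self-contained (no Milnor numbers, no $h^3$ of the smooth model) and is tailored to the Weierstrass geometry, whereas the paper's argument applies verbatim to any hypersurface with isolated singularities of known Milnor number. One small point to make explicit in your final sentence: the stratified multiplicativity of the compactly supported Euler characteristic over $\F_q$ requires tameness of the fibration over each stratum, which holds here since $p\geq 5$, so the transfer to the $\ell$-adic or rigid setting is indeed harmless; the paper sidesteps this by comparing $\tilde{Y}$, $E$ and $\Sigma$ with their reductions instead.
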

\begin{proof} Suppose first that $\cha(K)=0$.  A smooth degree 6 hypersurface $Y'$ in $\Ps(2,3,1,1,1)$ has $h^3(Y')=42$ (this can be obtained, e.g.,, by using the results from \cite[Section 4]{SteQua}) and therefore $\chi(Y')=-38$. Since $Y$ has $9$ $D_4$ singularities (which have Milnor number 4) we have by, e.g.,  \cite[Corollary 5.4.4]{Dim} that $\chi(Y)=-38+36=-2$. 

Now, we have that $\chi(\tilde{Y })=\chi(\tilde{Y }_{\F_p})$, since $p$ is a prime of good reduction. For the same reason also $\chi(E)=\chi(E_{\F_p})$ and $\chi(\Sigma)=\chi(\Sigma_{\F_p})$ holds.

From the Mayer-Vietoris sequence for the discriminant square \cite[Corollary 5.37]{PSbook} we obtain that $\chi(Y)=\chi(\tilde{Y})+\chi(\Sigma)-\chi(E)$ and the same relation in positive characteristic, hence $\chi(Y)=\chi(Y_{\F_p})$.
%
\end{proof}

Denote with $w_{3,k}=\dim \Gr^W_k H^3(Y)$.
Since $E$ is a union of rational surfaces, not intersecting in curves, it follows that $H^2(E)$ is pure of weight 2. Since $\tilde{Y}$ is smooth it follows that $H^3(\tilde{Y})$ is pure of weight 3.

The following exact sequence is part of the Mayer-Vietoris exact sequence for the discriminant square \begin{eqnarray} \label{MVseqa}\dots  \to H^2(E)\to H^3(Y)\to H^3(\tilde{Y})\to \dots  \end{eqnarray}
From this  it follows that $w_{3,k}=0$ for $k\neq 2,3$. Hence $h^3(Y)=w_{2,3}+w_{3,3}$.


\begin{lemma} Suppose $p\equiv 1 \bmod 6$. Then we have that 
\begin{eqnarray*} w_{2,3}&\geq& \frac{p^3+19p^2 +p+1-\#Y(\F_p)- 24 p^{3/2}}{p^2-2p^{3/2}+p } \mbox{ and } \\w_{2,3} &\leq &\frac{p^3+19 p^2 +p+1-\#Y(\F_p)+ 24 p^{3/2}}{p^2+2p^{3/2}+p }          \end{eqnarray*}

\end{lemma}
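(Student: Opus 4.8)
The plan is to read off all of the cohomology of $Y$ together with its Frobenius action and then feed this into the Lefschetz trace formula for rigid cohomology. Since $p\equiv 1\bmod 6$, the prime $p$ is neither $2$ nor $3$, so $Y_{\F_p}$ admits a resolution and $\tilde Y$ has good reduction; moreover $p\equiv 1\bmod 3$, so Proposition~\ref{prpSec2} applies and $H^4_\Sigma(Y)=\Q_q(-2)^{18}$. As $Y$ is proper, rigid cohomology agrees with its compactly supported version, so $\#Y(\F_p)=\sum_i(-1)^i\Tr(\Frob_p\mid H^i(Y))$. First I would therefore determine each $H^i(Y)$.

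Second, I would pin down the groups. We already have $H^0(Y)=\Q_q$, $H^2(Y)=\Q_q(-1)$ and $H^1(Y)=0$, while $H^6(Y)=\Q_q(-3)$ because $Y$ is irreducible and proper. For $H^5$, the Mayer--Vietoris sequence for the discriminant square forces $H^1(\tilde Y)=0$ (it is a quotient of $H^1(Y)=0$ since $H^1(E)=0$), hence $H^5(\tilde Y)\cong H^1(\tilde Y)^*(-3)=0$ by Poincar\'e duality on the smooth proper $\tilde Y$, and the same sequence then gives $H^5(Y)=0$. From Proposition~\ref{prpw23} together with $H^4_\Sigma(Y)=\Q_q(-2)^{18}$ I read off two facts: the injected submodule $\Gr^W_2H^3(Y)^*(-3)$ is a sum of copies of $\Q_q(-2)$, so $\Frob_p$ acts as $p$ on $\Gr^W_2H^3(Y)$; and the quotient gives $\dim H^4(Y)_{\prim}=18-w_{2,3}$, whence $h^4(Y)=19-w_{2,3}$ with $H^4(Y)$ pure of weight $4$ and $\Frob_p=p^2$ (the primitive part being Tate, the remaining class being the square of the hyperplane class). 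The only piece not determined by weights alone is $\Gr^W_3H^3(Y)$, of dimension $w_{3,3}$, on which $\Frob_p$ has eigenvalues of absolute value $p^{3/2}$.

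Third, I would eliminate $w_{3,3}$ using $\chi(Y)=-2$ from Lemma~\ref{lemchi}. Summing the above gives $\chi(Y)=1+1-(w_{2,3}+w_{3,3})+(19-w_{2,3})+1=22-2w_{2,3}-w_{3,3}$, so $w_{3,3}=24-2w_{2,3}$. The trace formula then reads $\#Y(\F_p)=p^3+19p^2+p+1-w_{2,3}(p^2+p)-T_3$, where $T_3=\Tr(\Frob_p\mid\Gr^W_3H^3(Y))$ satisfies the Weil bound $|T_3|\le w_{3,3}\,p^{3/2}=(24-2w_{2,3})p^{3/2}$. Writing $N=p^3+19p^2+p+1-\#Y(\F_p)$, so that $N=w_{2,3}(p^2+p)+T_3$, the inequality $T_3\ge-(24-2w_{2,3})p^{3/2}$ gives $w_{2,3}\,p(\sqrt p+1)^2\le N+24p^{3/2}$, while $T_3\le(24-2w_{2,3})p^{3/2}$ gives $w_{2,3}\,p(\sqrt p-1)^2\ge N-24p^{3/2}$; dividing by the positive quantities $p(\sqrt p\pm1)^2=p^2\pm2p^{3/2}+p$ yields exactly the two claimed bounds.

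The main obstacle is the second step: getting the Frobenius action on the nose rather than merely controlling weights. The key leverage is that $H^4_\Sigma(Y)$ is a \emph{pure} Tate module, so every sub- and quotient module is again Tate, which is what forces $\Frob_p$ to act as $p$ on $\Gr^W_2H^3(Y)$ and as $p^2$ on all of $H^4(Y)$; one must also verify $H^5(Y)=0$ so that no stray weight-$4$ or weight-$5$ term contaminates the count. A secondary point worth flagging is that the a priori bound on the undetermined trace $T_3$ is tied to $w_{3,3}$, which by $\chi(Y)=-2$ depends linearly on $w_{2,3}$ itself; it is precisely this feedback that produces the denominators $p(\sqrt p\pm1)^2$ and, for $p$ large, squeezes $w_{2,3}$ into an interval of length less than one.
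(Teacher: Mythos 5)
Your proposal is correct and follows essentially the same route as the paper: determine all cohomology groups and Frobenius eigenvalues except on $\Gr^W_3H^3(Y)$ (using Proposition~\ref{prpSec2} and Proposition~\ref{prpw23} to get $h^4=19-w_{2,3}$ and the Tate-type Frobenius action), eliminate $w_{3,3}=24-2w_{2,3}$ via $\chi(Y)=-2$, and apply the Lefschetz trace formula with the Weil bound on the weight-$3$ part. The only cosmetic difference is that you derive the vanishing of $H^5(Y)$ from Mayer--Vietoris and Poincar\'e duality on $\tilde{Y}$, where the paper simply invokes the Lefschetz hyperplane theorem for $h^i(Y)$, $i\neq 0,2,3,4,6$.
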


\begin{proof}
From the Lefschetz hyperplane theorem \cite[Theorem B22]{Dim} it follows that $h^i(Y)=0$ for $i\neq 0,2,3,4,6$, that $h^0(Y)$ and $h^6(Y)=1$. Now $H^4(Y)$ is  pure of weight 4, whereas $H^3(Y)$ has a weight 2 and a weight 3 part. 
Hence
\[ \chi(Y)=3+h^4-w_{2,3}-w_{3,3} .\]
From Proposition~\ref{prpw23} it follows that
\begin{equation}
\label{h4rel} h^4=h^4_\Sigma+1-w_{2,3} 
 \end{equation}
Substituting this and solving for $w_{3,3}$ yields
\[ w_{3,3}= h^4_\Sigma+4-2w_{2,3}-\chi(Y) .\]
Since $p\equiv 1 \bmod 6$ it follows from Proposition~\ref{prpSec2}  that  $\Frob_p$ acts as multiplication by $p^2$ on $H^4_\Sigma(Y)$. Using Proposition~\ref{prpw23} it follows that $\Frob_p$ acts as multiplication by $p^2$ on $H^4(Y)$ and as multiplication by $p$ on $\Gr^W_2H^3$.

The Lefschetz trace formula
\[ \#Y(\F_q)= \sum (-1)^i \Tr \Frob | H^i \]
 reads as
\[\# Y(\F_q)= 1+p(1-w_{2,3})-\Tr W_3H^3 +p^2 h^4+p^3\]
or, equivalently,
\[   \# Y(\F_q)-1-p+pw_{2,3}-p^2 h^4-p^3 = \Tr W_3 H^3\]
Taking absolute values yields
\[  | \# Y(\F_q)-1-p+pw_{2,3}-p^2 h^4-p^3 |= |\Tr W_3 H^3| \leq w_{3,3}p^{3/2}.\]

Combined with (\ref{h4rel}), this yields
\[  | \# Y(\F_p)-1-p+(p+p^2)w_{2,3}-p^2 h^4_\Sigma-p^2 -p^3 |\leq (h^4_\Sigma+4-\chi-2w_{2,3}) p^{3/2}.\]
Setting $h^4_\Sigma=18,\chi=-2$ (From Proposition~\ref{prpSec2} and Lemma~\ref{lemchi}) this simplifies to
\begin{eqnarray*} w_{2,3}&\geq& \frac{p^3+19p^2 +p+1-\#Y(\F_p)- 24 p^{3/2}}{p^2-2p^{3/2}+p } \mbox{ and } \\w_{2,3} &\leq &\frac{p^3+19 p^2 +p+1-\#Y(\F_p)+ 24 p^{3/2}}{p^2+2p^{3/2}+p }     .     \end{eqnarray*}
\end{proof}

\begin{proposition}
 We have that $w_{2,3}=12, w_{3,3}=0$ and  $h^{2,2}=h^4=7$.
\end{proposition}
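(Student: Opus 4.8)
The plan is to pin down $w_{2,3}$ using the numerical bounds of the preceding lemma and then read off $w_{3,3}$, $h^4$ and $h^{2,2}$ from the linear relations already in hand. First I would substitute the known numerical values $h^4_\Sigma=18$ (Proposition~\ref{prpSec2}) and $\chi(Y)=-2$ (Lemma~\ref{lemchi}) into the two relations derived above. From (\ref{h4rel}) one gets $h^4=19-w_{2,3}$, and the displayed formula $w_{3,3}=h^4_\Sigma+4-2w_{2,3}-\chi(Y)$ becomes $w_{3,3}=24-2w_{2,3}$. Since $w_{3,3}=\dim\Gr^W_3H^3(Y)\geq 0$, this already forces the a priori inequality $w_{2,3}\leq 12$, so it only remains to produce a matching lower bound.

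For the lower bound I would apply the previous lemma at a convenient prime $p\equiv 1\bmod 6$ of good reduction, say $p=7$, and compute $\#Y(\F_p)$ by a naive machine point count. I expect the count to return $\#Y(\F_7)=610$ (equivalently $\#Y(\F_p)=p^3+7p^2-11p+1$), so that $p^3+19p^2+p+1-\#Y(\F_p)=12(p^2+p)$. Feeding this into the lower bound gives $\tfrac{12(p^2+p)-24p^{3/2}}{p^2+p-2p^{3/2}}=12$, and similarly the upper bound equals $12$: the factor $p^2+p\pm 2p^{3/2}$ cancels and the interval degenerates to the single point $\{12\}$. Thus a single small prime already isolates $w_{2,3}=12$; in the logically robust formulation, one needs only that the lower bound exceeds $11$, which together with $w_{2,3}\leq 12$ gives the same conclusion.

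Once $w_{2,3}=12$ is established, the two relations yield $w_{3,3}=24-24=0$ and $h^4=19-12=7$ immediately. For the Hodge number, I would invoke that $H^4(Y)$ is pure of type $(2,2)$ (the purity coming from Dimca's theory, already used for $\Sigma$ and $Y$); since the remaining nonzero groups $H^0(Y),H^2(Y)=\Q(-1),H^6(Y)=\Q(-3)$ and the weight-$2$ part of $H^3(Y)$ carry no $(2,2)$-classes, we conclude $h^{2,2}=h^4=7$.

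The only computational input is the point count, which is entirely routine. The one conceptual point worth flagging is \emph{why} a single small prime suffices rather than merely forcing the Van Geemen--Werner interval below width $1$: the vanishing $w_{3,3}=0$ means $\Frob_p$ has trace zero on $\Gr^W_3H^3(Y)$, which is exactly what collapses the interval to a point. A priori this vanishing is not known, so the honest order of argument is to compute the bounds, observe that $12$ is the unique admissible integer, and only afterward deduce $w_{3,3}=0$ and $h^4=h^{2,2}=7$.
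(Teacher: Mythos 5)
Your proposal is correct and follows essentially the same route as the paper: evaluate the bounds of the preceding lemma at $p=7$ with $\#Y(\F_7)=610$, observe that the upper and lower bounds both equal $12$, and then read off $w_{3,3}=0$ and $h^4=h^{2,2}=7$ from the relations $w_{3,3}=24-2w_{2,3}$ and $h^4=19-w_{2,3}$ together with purity of $H^4(Y)$. Your closing observation about why a single small prime suffices (namely that $w_{3,3}=0$ collapses the interval) is exactly the content of the remark the paper places immediately after the proposition.
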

\begin{proof}
The smallest prime congruent 1 modulo 3 is 7. Now $\#Y(\F_7)=610$. In this case the upper and lower bound coincide, and equal 12.
This implies that $w_{2,3}=12$, $h^4=7$ and $w_{3,3}=0$.
\end{proof}
\begin{remark}
The original method of Van Geemen and Werner used primes that are slightly bigger. That $p=7$ suffices follows from the fact that $w_{3,3}=0$. This latter fact implies that the upper and lower bound coincide for every prime $p\equiv 1 \bmod 3$.
\end{remark}

Collecting everything we get:
\begin{theorem} Suppose $K\subset \C$. We have
\[ H^i(Y,\Q) = \left\{ \begin{array}{cl}
 \Q(-i/2) & i=0,2,6 \\
 \Q(-1)^{12} & i=3 \\
 \Q(-2)^7 & i=4 \\
 0  & i\neq 0,2,3,4,6
 \end{array}  \right.\]
            and $\rank E(\oQ(s,t))=6$.
\end{theorem}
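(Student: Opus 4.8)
The plan is to assemble the theorem from the structural results of this section, the dimension computation just completed, and the purity statements. First I would dispose of the cohomology groups that carry no arithmetic information. The Lefschetz hyperplane theorem (already invoked above) gives $h^i(Y)=0$ for $i\notin\{0,2,3,4,6\}$, together with $H^0(Y)=\Q$, $H^2(Y)=\Q(-1)$, and $H^6(Y)=\Q(-3)$ by compactness and the fact that $Y$ is a threefold. This leaves only $H^3(Y)$ and $H^4(Y)$ to identify, both as vector spaces and as Hodge (equivalently Frobenius) modules.

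For the dimensions I would quote the preceding proposition: $h^3(Y)=w_{2,3}+w_{3,3}=12+0=12$ and $h^4(Y)=7$. To upgrade these to the stated Tate structures I would use purity. Since $w_{3,3}=0$, the group $H^3(Y)$ is pure of weight $2$; its Tate type is forced by the short exact sequence of Proposition~\ref{prpw23}, in which $\Gr^W_2 H^3(Y)^*(-3)$ injects into $H^4_\Sigma(Y)=\Q(-2)^{18}$, so that this subobject is $\Q(-2)^{12}$ and hence $\Gr^W_2 H^3(Y)=\Q(-1)^{12}$. Similarly $H^4(Y)$ is pure of weight $4$, and the same sequence exhibits $H^4(Y)_{\prim}$ as the quotient $\Q(-2)^{18}/\Q(-2)^{12}=\Q(-2)^6$; adjoining the Tate class $\Q(-2)$ coming from the square of the hyperplane section gives $H^4(Y)=\Q(-2)^7$, consistent with $h^4=h^4_\Sigma+1-w_{2,3}=7$.

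The one genuine subtlety, and the step I expect to be the main obstacle, is that $w_{2,3}=12$ was pinned down over $\F_7$ by a point count, whereas the theorem is stated for $K\subset\C$; there is no point count available in characteristic $0$, so the value must be transferred. To do so I would appeal to the good reduction already established: $\tilde Y$ has good reduction at $7$, and the Mayer--Vietoris sequence for the discriminant square together with Proposition~\ref{prpSec2} and Lemma~\ref{lemchi} hold verbatim in both the de Rham and the rigid setting, relating $H^3(Y)$ to the cohomology of $\tilde Y$, $E$ and $\Sigma$, whose dimensions and weight-graded pieces are preserved under reduction. Hence the weight-graded dimensions $w_{2,3},w_{3,3}$ read off from the rigid cohomology of $Y_{\F_7}$ coincide with those of the de Rham cohomology of $Y_\C$, and the characteristic-$0$ Betti numbers are exactly the ones displayed.

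Finally, to read off the Mordell--Weil rank I would invoke the identity recalled in the introduction: $\rank E(\oQ(s,t))+1$ equals the rank of $H^4(Y,\Z)\cap H^{2,2}$. Since $H^4(Y)=\Q(-2)^7$ is pure of type $(2,2)$, this intersection is all of $H^4(Y)$, of rank $h^4=7$, whence $\rank E(\oQ(s,t))=7-1=6$.
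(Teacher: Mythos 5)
Your proposal is correct and assembles the theorem from exactly the same ingredients the paper intends (the paper gives no separate proof, prefacing the theorem only with ``Collecting everything we get''): Lefschetz for the trivial groups, Proposition~\ref{prpw23} and Proposition~\ref{prpSec2} for the Tate structures on $H^3$ and $H^4$, the computation $w_{2,3}=12$, $w_{3,3}=0$, $h^4=7$, and the identity $\rank E(\oQ(s,t))+1=\rank\bigl(H^4(Y,\Z)\cap H^{2,2}\bigr)$ from the introduction. Your explicit treatment of the transfer of $w_{2,3}$ from the rigid cohomology of $Y_{\F_7}$ to the de Rham cohomology of $Y_\C$ via good reduction of $\tilde Y$ and the discriminant-square Mayer--Vietoris sequence is precisely the comparison the paper relies on implicitly.
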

\begin{remark}
In the introduction we gave six independent points in $E(\oQ(s,t))$ yielding a finite index subgroup. A standard specialization argument (as in \cite{elljcst}) shows that $E(\oQ(s,t))$ is torsion free.
\end{remark}


\end{document}